\newtheorem{thm}{Theorem}[section]
\newtheorem{lem}[thm]{Lemma}
\newtheorem{cor}[thm]{Corollary}
\newtheorem{prop}[thm]{Proposition}
\newtheorem{rem}[thm]{Remark}
\newtheorem{ques}{Question}
\newtheorem*{hypo}{Hypothesis}
\newcommand{\N}{\ensuremath{\mathbb{N}}}%naturais
\newcommand{\A}{\ensuremath{\mathrm{A}}}%sharp multiplicity 
\newcommand{\Oh}{\ensuremath{\mathrm{O}}}%big Oh
\newcommand{\oh}{\ensuremath{\mathrm{o}}}%small oh
\newcommand{\nP}{\ensuremath{\mathcal{P}}}% set of prime numbers
\renewcommand{\l}{\ensuremath{\ell}}% set of prime numbers
\renewcommand{\l}{\ell}
\keywords{Euler's function, totients, multiplicity, distribution}
\subjclass[2010]{11A25 \and 11N64}
\begin{document}

\title{$2$-adic stratification of totients}

\author{Andr\'{e} Contiero}
\address{Departamento de Matem\'atica, ICEx, UFMG Av. Ant\^onio Carlos 6627, 30123-970 Belo Horizonte MG, Brazil}
\email{contiero@ufmg.br%\footnote{Partially supported by \emph{Universal CNPq 486468/2013-5 } and by \emph{Programa Institucional de Aux\'ilio \`a Pesquisa de Docentes Rec\'em-Contratados - UFMG}}
}
\author{Davi Lima}
\address{Instituto de Matem\'atica, UFAL. Av. Lourival de Melo Mota, s/n, 57072-900 Macei\'o AL, Brazil}
\email{davimat@impa.br}

\begin{abstract}
In this paper we study the multiplicities and the asymptotic behaviour of the numbers of totients in the strata given by $2$-adic valuation.
\end{abstract}

\maketitle

\section{Introduction}

One of the main multiplicative functions in number theory is the well known Euler's 
totient function, whose value for a positive integer number $n$ is 
$$\phi(n):=\#\{1\leq a\leq n\,\, \vert\, \gcd(a,n)=1\}.$$ 
A particular subject of study is the set $\mathscr{V}$ of \textit{totients}, i.e. the 
set of the values taken by Euler's function. $$\mathscr{V}:=\{1,2,4,6,8,10,12, 16, 18,\dots\}\,.$$ 
The distribution of suitable totients is yet subject of investigation of many authors 
and from many perspectives. 
%An usual technique involves the study of pre-images of totients, in special the 
%prime divisors of elements in their pre-images. 
One of the main objects of study is the \textit{multiplicity} of a given 
positive integer $m\in\mathbb{N}$, that is the number of 
elements in its pre-image 
under $\phi$, namely $$\A(m):=\vert\phi^{-1}(m)\vert .$$ 
Equivalently, $\mathrm{A}(m)$ is 
the number of solutions of the equation $\phi(x)=m$. 
%It is clear that $\A(m)=0$ if and only if $m$ is a nontotient. 

%The almost centennial Carmichael's conjecture, see \cite{C}, states that there is no $m
%\in\mathscr{V}$ such that $\A(m)=1$. P. Erd\"os made remarkable contributions in the 
%direction of this famous conjecture. He firstly proved that if there is a counter-
%example, then there are infinitely many of them, cf. \cite[Thm. ?]{Erdors1}. Later on, 
%he proved that if there is a counter-example, then the relative density of the number 
%of counter-examples is positive, see 
%\cite[Thm. ?]{Erdos2}. Hence, Carmichael's conjecture is equivalent to the density of 
%the totients with multiplicity one to be zero. In this way K. Ford 
%\cite[Thm. ?]{Ford2} proved that the density of such totients is not bigger than 
%$10^{-5\cdot10^9}$.
%
%In regard to multiplicities of totients, Pillai \cite{Pillai} proved that they can 
%assume values as bigger as you wish, more precisely, he proved that $\limsup\A(m)=
%\infty$ when $m\rightarrow\infty$. A celebrated Theorem due to K. Ford, cf. 
%\cite{Ford}, states that for each $k>1$ there is $m$ such that $\A(m)=k$. In other 
%words, the number of solutions of the equation $\phi(x)=k$ can assume any value bigger
%than one. Ford's theorem is the answer to an old conjecture due to Sierpi\'nski. 

Since Euler's $\phi$-function is multiplicative, and for every prime 
number $p$ one has $\phi(p^n)=p^{n-1}(p-1), \,\forall\,n\geq 1$, we 
easily see that totients bigger than $1$ are even numbers. Hence the 
most naive stratification of $\mathscr{V}$ can be made
by taking totients with a fixed $2$-adic valuation. 
More precisely, for each $\l\geq 1$ we pick up the following subset of $\mathscr{V}$,
$$\mathscr{V}^{\l}:=\{m\in \mathscr{V}\,;\,m \equiv 2^{\l} \mod 2^{\l+1} \}.$$
As usual, for each real number $x\in\mathbb{R}$ we write 
$\mathscr{V}^{\l}(x):=\{m\in \mathscr{V}^{\l} ;m\le x\}$, analogously for $\mathscr{V}(x)$. 
We also write $\mathrm{V}^{\l}(x)$ and $\mathrm{V}(x)$ for the number of elements in 
$\mathscr{V}^{\l}(x)$ and in $\mathscr{V}(x)$, respectively.
For a fixed positive integer $\l>1$ two very natural questions can be made.

\begin{ques}\label{quest2}
What is the asymptotic behaviour of $\mathrm{V}^{\l}(x)$?
\end{ques}

\begin{ques}\label{quest1}
What multiplicities of elements in $\mathscr{V}^{\l}$ are possible?
\end{ques}

We did not find in the literature papers addressed to answer the
above two questions. In regarding to the order of the set $\mathrm{V}(x)$ 
of all totients not greater than $x$, K. Ford obtained its exact order, 
see \cite[Thm. 1]{F1998}. Before Ford, H. Maier and C. Pomerance, cf. \cite{MaPom}, 
obtained a nice order for $\mathrm{V}(x)$ that will be useful here. In \cite{Pillai} S. Pillai proved 
that the set of multiplicity of totients is unbounded, he proved that 
$$\limsup_{x\rightarrow\infty}\{\mathrm{A}(m)\,;\,m\in\mathscr{V}(x)\}=\infty.$$
Pillai's theorem can be considered as an insight to Sierpi\'nski's 
conjecture that says that for each $k>1$ there is $m\in\mathbb{N}$ such 
that $\mathrm{A}(m)=k$. Sierpi\'nski's conjecture was completely solved by 
K. Ford in \cite{F1999}.
%
%
%
%The purpose of this note is to answer the above two questions for the 
%simplest case $\l=2$. 

%We also provide some computations concerning to these two questions
%for few $k>2$, and with these computations in hands, we formulate and share some 
%problems that we do not know how to solve them. Some of these problems may be connect 
%to Carmichael's conjecture, and one of them is a possible strong version of 
%Sierp\'nski's conjecture.

We divide this short paper in 3 sections. The section 2 deals with the 
simplest (and very easy) case where $\l=1$. The multiplicity of totients in $
\mathscr{V}^{1}$ are just $2$ or $4$, and the elements in the pre-images
of such totients are just a power of an odd prime and twice this power of an odd prime. 
So the main idea is that totients in $\mathscr{V}^{1}(x)$ are just image 
of prime numbers $\equiv 3\mod 4$, except those too rare totients in $
\mathscr{V}^{1}$ whose pre-image has a power of prime number with 
exponent bigger than one. Hence $\mathscr{V}^{1}(x)$ has asymptotic 
order $\pi(x)/2$, where $\pi(x)$ stands for the number of primes numbers 
not bigger than $x$, see Corollary \ref{main}.

In section 3 we deal with the case 
$\l\geq 2$. While the multiplicities of totients in $\mathscr{V}^{1}$ are 
far from being exciting, the multiplicities of totients in $\mathscr{V}^{\l}$ 
with $\l\geq 2$ seems to be unbounded, i.e. $\limsup_{m\in\mathscr{V}^{\l}}
\mathrm{A}(m)=\infty$.
It is very simple to see that if 
there is some $\l_0$ such that the set of multiplicities of totients in 
$\mathscr{V}^{\l_0}$ is unbounded, then the set of multiplicities of totients in 
$\mathscr{V}^{\l_0+n}$ is also unbounded for all $n\geq 0$, c.f. Proposition 
\ref{cond1}. We are able to show that Dickson's $k$-tuples Conjecture implies that $\l_0=2$,
see Theorem \ref{Dick2}. The remaining of Section 3 is devoted to provide a different 
approach to get information on $\l_0$.
The main idea lies 
in Theorem \ref{t.2} and involves of finding a 
suitable lower bound for the number $\mathrm{S}^{\l}(x)$ of the solutions in 
$\mathscr{V}^{\l}(x)$ of the equation $\phi(z)\leq x$. To do so, first we provide the following
upper bound addressing to
Question \ref{quest2} above
$$\mathrm{V}^{\l}(x)=\Oh_{\l}\left(\dfrac{x}{\log x}(\log\log x)^{\l}\right),$$ that
derives from a classical result due to G. Hardy and S. 
Ramanujan, see Theorem \ref{OhVl}. We also show the inequality 
$$ S^{\ell+n}(2^n x)\ge \dfrac{S^{\ell}(x)}{2^n}\ \forall\,n\geq 0, $$
c.f. Lemma \ref{lemal0n}.  Now, for each real number $x>2$, there is a positive integer
$\tilde{\l}(x)$ such that $\mathrm{S}^{\tilde{\l}(x)}(x)>\frac{1}{2^{\tilde{\l}(x)}}\mathrm{V}(x)$.
So the key is to show the existence of a positive integer $\l_0$ that is given by
$$\l_0:=\liminf_{x\rightarrow\infty}\min\{\tilde{\l}(x)\}.$$
If we assume that such $\l_0$ exists, through the equation
\begin{equation*}
\mathrm{S}^{\l_0+n}(2^nx_i)\geq \dfrac{1}{2^n}\mathrm{S}^{\l_0}(x_i)>\dfrac{1}{2^{\l_0+n}}\dfrac{\mathrm{V}(x_i)}{\mathrm{V}^{\l_0+m}(2^{n}x_i)}\mathrm{V}^{\l_0+m}(2^{n}x_i),
\end{equation*}where $(x_i)$ is a  suitable sequence of positive real numbers that goes to infinity,
and using a result involving the above order of $\mathrm{V}^{\l}(x)$ and the order of $\mathrm{V}(x)$ due to  H. Maier and C.
Pomerance, c.f. Corollary \ref{mc1}, then we can show
$$\limsup_{m\in\mathscr{V}^{\l_0+n}}\mathrm{A}(m)=\infty, \ \forall n\geq 0,$$
that is precisely the contents of our Theorem \ref{main2}.

The proof of the existence of the above $\liminf$ seems to be workable, see some computations on Table ~\ref{tabela2} below. 
We strongly believe that $\l_0$ is equal to $2$, as suggested by Dickson's $k$-tuples conjecture.
We also note that proving that $\l_0$ is a positive integer, then for each $\l\geq \l_0$,
the multiplicities of $\mathscr{V}^{\l}$ are unbounded. So one should determine all the positive integers that are realized
as the multiplicity of totients in $\mathscr{V}^{\l}$, equivalently, the number of the solutions of the equation $\phi(x)=m$ where $m\in\mathscr{V}^{\l}$.
Hence, to show that $\l_0$ exists, is an insight to a Sierpi\'nski-type problem on each stratum $\mathscr{V}^{\l}$. 

\bigskip

\textbf{Acknowledgements:} We would like to express our appreciation to Kevin Ford for his valuable and constructive suggestions during the development of this paper.
%whose 
%proof assumes the existence of a suitable $\l_0
%that we believe to be
%workable, c.f. Table ~\ref{tabela2}.
%It is quite trivial to prove that totients in $\mathscr{V}^{1}$ have 
%multiplicities only equals to $0$, $2$ or $4$. This led us to
%introduce the sets $$\mathscr{V}^{1}_{\l}:=\{m\in\mathscr{V}^{1};\, \A(m)=\l\}\ \ \ \ 
%(\l=0,2,4).$$
%Trough a result on the relative asymptotic density of totients whose pre-images have a 
%power (greater than one) of an odd prime number, cf. Lemma A, we can
%prove that $\mathscr{V}^{1}(x)$ has magnitude $\pi(x)/2$, where $\pi(x)$ stands for 
%the number of primes numbers not bigger than $x$, see Corollary \ref{main}. We also 
%show that $\vert\mathscr{V}^{1}_{4}(x)\vert=\oh(\vert\mathscr{V}^{1}_{2}(x)\vert)$, 
%see Corollary \ref{corA}. 
 
\section{The simplest case $\l=1$}

The set of prime numbers is denoted by $\nP$ and $\pi(x)$ stands for the 
number of prime numbers not greater than $x$. We also use the 
\textit{big} $\Oh$  and \textit{small} $\oh$ standard notations. The 
following notation will be useful throughout this paper: given any subset 
$\mathrm{U}$ of the positive integers and $x\in\mathbb{R}$ a real number, 
$\mathrm{U}(x)$ stands for the elements of $\mathrm{U}$
not greater than $x$, $$\mathrm{U}(x):=\{n\in\mathrm{U};\,n\leq x\}\,,$$ and it is 
clear that $\vert \mathrm{U}(x)\vert$ denotes the magnitude of $\mathrm{U}(x)$.

\begin{lem}\label{lemmaA}
Given an integer number $t>0$ and considering the set 
$$\mathcal{R}_{t}:=\{k\in\mathscr{V}\,\vert\,q^j\in \phi^{-1}(k)\mbox{ with } j\geq t\mbox{ and } q\in\mathcal{P}\},$$ we get
$$\vert\mathcal{R}_{t}(x)\vert=\oh(\sqrt[t]{x})\,.$$
\end{lem}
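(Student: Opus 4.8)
The plan is to bound $|\mathcal{R}_t(x)|$ from above by counting the prime powers that can produce its elements. First I would note that every $k\in\mathcal{R}_t$ has the form $k=\phi(q^j)=q^{j-1}(q-1)$ for some prime $q$ and some exponent $j\geq t$. Choosing, for each $k\in\mathcal{R}_t(x)$, one such prime power $q^j$ in its preimage yields a map $k\mapsto q^j$; this map is injective, since $\phi(q^j)$ recovers $k$, so two distinct totients cannot be assigned the same prime power. Hence $|\mathcal{R}_t(x)|$ is at most the number of admissible prime powers $q^j$ with $j\geq t$, and I only need to count those.

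Next I would translate the constraint $k\leq x$ into a bound on $q^j$. Since $\phi(q^j)=q^j(1-1/q)\geq q^j/2$ for every prime $q\geq 2$, the inequality $k=\phi(q^j)\leq x$ forces $q^j\leq 2x$. Therefore
\[
|\mathcal{R}_t(x)|\leq \#\{q^j\leq 2x : q\in\mathcal{P},\ j\geq t\}=\sum_{j\geq t}\pi\big((2x)^{1/j}\big).
\]
I would then isolate the leading term $j=t$ from the tail $j>t$. For $j=t$, the prime number theorem (or merely Chebyshev's bound $\pi(y)=\Oh(y/\log y)$) gives $\pi\big((2x)^{1/t}\big)=\Oh_t\!\big(x^{1/t}/\log x\big)$, which is already $\oh(\sqrt[t]{x})$. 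This logarithmic saving is the whole point, and is the only place where a genuinely nontrivial estimate is required rather than a crude count of prime powers.

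For the tail I would use that $q^j\leq 2x$ with $q\geq 2$ forces $j\leq \log_2(2x)$, so there are $\Oh(\log x)$ terms, each bounded by $\pi\big((2x)^{1/j}\big)\leq (2x)^{1/(t+1)}$. This yields $\sum_{j>t}\pi\big((2x)^{1/j}\big)=\Oh\big(x^{1/(t+1)}\log x\big)$, and since $x^{1/(t+1)}\log x=x^{1/t}\cdot x^{-1/(t(t+1))}\log x=\oh(x^{1/t})$, the tail is negligible compared with $\sqrt[t]{x}$. Combining the two contributions gives $|\mathcal{R}_t(x)|=\oh(\sqrt[t]{x})$, as claimed. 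I do not anticipate a serious obstacle; the only care needed is to keep the $t$-dependence explicit and to confirm that the $j=t$ term genuinely beats $x^{1/t}$ by a factor of $\log x$, which is precisely the content of the prime-counting estimate.
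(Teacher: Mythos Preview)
Your proof is correct and uses the same mechanism as the paper's: associate each $k\in\mathcal{R}_t(x)$ with a prime (power) of bounded size and then invoke the Prime Number Theorem to get the $\oh(x^{1/t})$ saving. The paper is terser: from $k=\phi(q^m)\leq x$ it only extracts the bound $q\leq(2x)^{1/t}$ on the base prime and then bounds $|\mathcal{R}_t(x)|$ by $\pi((2x)^{1/t})$ directly. You instead map each $k$ to the full prime power $q^j\leq 2x$, which makes the injectivity of the assignment transparent, and then split $\sum_{j\geq t}\pi((2x)^{1/j})$ into the leading $j=t$ term and a negligible tail; this is slightly longer but more explicit about why the counting is legitimate.
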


\begin{proof} 
For every $k\in \mathcal{R}_{t}(x)$ there is a prime number $q$ and an integer $m\geq t$ such that 
	$$x \geq k=q^m-q^{m-1}\ge q^{t}-q^{t-1}\geq q^{t}/2.$$
Hence, from the above inequality we get the upper bound $q\leq \sqrt[t]{2x}$. 
Thus, by the Prime Number Theorem
\begin{equation*}
\dfrac{\vert\mathcal{R}_{t}(x)\vert}{\sqrt[t]{2x}}\leq\dfrac{\pi(\sqrt[t]{2x})}{\sqrt[t]{2x}},
\end{equation*} and so $\mathcal{R}_{t}(x)=\oh(\sqrt[t]{2x})=\oh(\sqrt[t]{x})$.
\end{proof}

For the sake of organization of the presentation we just state the next result, omitting its quite trivial proof.

\begin{lem}\label{lemmaB}
%\begin{fundlem}[F-Lemma]\label{F-Lemma}
For each odd positive integer, $\A(2r)\in\{0,2,4\}$.
If $\A(2r)=2$, then $\phi^{-1}(2r)=\{p^{n}, 2p^{n}\}$, with $p$ an odd 
prime and $n>0$. If $\A(2r)=4$, then $2r+1$ is a prime number and 
$\phi^{-1}(2r)=\{2r+1, q^{m}, 4r+2, 2q^{m}\}$ with $q$ a prime number and $m>1$.
%\end{fundlem}
\end{lem}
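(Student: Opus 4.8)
The plan is to exploit the multiplicativity of $\phi$ together with the elementary identity $\phi(2m)=\phi(m)$ for every odd $m$, which forces the fibre $\phi^{-1}(2r)$ to split into pairs $\{m,2m\}$, and then to bound the number of such pairs by a valuation count. First I would record the shape of an arbitrary preimage. Write $n=2^{a_0}\prod_i p_i^{a_i}$ with the $p_i$ distinct odd primes. Since $v_2(\phi(p_i^{a_i}))=v_2(p_i-1)\ge 1$ and $v_2(\phi(2^{a_0}))=\max(a_0-1,0)$, multiplicativity gives
$$v_2(\phi(n))=\max(a_0-1,0)+\sum_i v_2(p_i-1).$$
As $2r\equiv 2\pmod 4$ we need $v_2(\phi(n))=1$; because each odd prime contributes at least $1$ to the sum, $n$ can have at most one odd prime factor, and if it has one, say $p$, then $v_2(p-1)=1$ (so $p\equiv 3\pmod 4$) and $a_0\le 1$. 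Thus, apart from the degenerate case $n=2^{a_0}$ with no odd prime factor (which forces $a_0=2$, $n=4$, and only arises when $2r=2$), every preimage equals $p^{b}$ or $2p^{b}$ for a single odd prime $p\equiv 3\pmod 4$ and some $b\ge 1$.

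Since $\phi(p^{b})=\phi(2p^{b})$, these preimages come in genuine pairs $\{p^{b},2p^{b}\}$, so $\A(2r)$ is even, and it remains to show there are at most two pairs. For $b=1$ the equation $\phi(p)=p-1=2r$ forces $p=2r+1$, giving at most one pair, present exactly when $2r+1$ is prime (and then automatically $2r+1\equiv 3\pmod 4$, as $r$ is odd). For $b\ge 2$ I would argue that $\phi(p^{b})=p^{b-1}(p-1)=2r$ determines $p$ uniquely: reading off $p$-adic valuations gives $b-1=v_p(2r)$, hence $p-1=2r/p^{v_p(2r)}$. If two distinct odd primes $p<p'$ both solved this, then since $p'\mid 2r$ and $p'\neq p$ we would get $p'\mid(p-1)$, which is impossible because $p'>p-1$. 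Hence there is at most one pair with $b\ge 2$, so $\A(2r)\le 4$. When $\A(2r)=4$ both pairs must occur, namely the $b=1$ pair $\{2r+1,4r+2\}$ and a $b\ge 2$ pair $\{q^{m},2q^{m}\}$ with $m>1$, which is exactly the asserted structure.

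The only genuinely delicate point is the $b\ge 2$ uniqueness, which is settled by the valuation/largest-prime argument above; everything else is bookkeeping on the pairing. I would also flag the single true exception at $2r=2$: there the extra preimage $n=4$ is unpaired, so in fact $\A(2)=3$, and the statement is to be read for odd $r>1$ (equivalently $2r\ge 6$), where the power-of-two term never interferes.
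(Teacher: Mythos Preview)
Your argument is correct. The paper in fact omits the proof of this lemma entirely, calling it ``quite trivial'', so there is nothing to compare against; your write-up supplies exactly the kind of elementary valuation-and-pairing argument the authors presumably had in mind. The only nontrivial step is the uniqueness of the $b\ge 2$ pair, and your largest-prime-divisor argument handles it cleanly: if $p<p'$ both satisfy $p^{b-1}(p-1)=2r=(p')^{b'-1}(p'-1)$ with $b,b'\ge 2$, then $p'\mid 2r=p^{b-1}(p-1)$ forces $p'\mid p-1$, impossible.

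Your flag about $r=1$ is well taken and worth keeping: $\phi^{-1}(2)=\{3,4,6\}$ gives $\A(2)=3$, so the lemma as stated is literally false at this single value. The paper's later uses of the lemma (Corollary~\ref{A}, Theorem~\ref{thmA}) are asymptotic and insensitive to one exceptional point, so nothing downstream is affected, but the hypothesis should read ``odd $r>1$''.
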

%\begin{proof} 
%Let $x$ be a positive integer such that $\phi(x)=2r$ and let $S_x$ be the set of odd prime factors of $x$.
%If there are prime numbers $p,q>2$ such that  $p,q\in S_x$, then $2r\equiv 0\mod 4$, which is a contradiction. Thus $x=p^k$ or $x=2p^k$.
%Note that $x=2p^k$ is a solution of $\phi(x)=2r$ iff $\phi(x/2)=2r$. Then, we can suppose that $x=p^k,y=q^m$ and $z=t^l$ are solutions of 
%          \begin{equation}\label{F-lemma}
%          \phi(w)=2r.
%          \end{equation}
%Assuming that $k,m>1$, from (\ref{F-lemma}) we get
%         \begin{equation*}
%         p^{k-1}(p-1)=q^{m-1}(q-1)\,.
%        \end{equation*}
%Since $k,m>1$, $p\vert\, q-1$ and $q\vert\,p-1$, which is a contradiction.
%Then we can assume that $k=1$ and therefore $m>1$. The above argument immediately implies that $l=1$. It follows from (\ref{F-lemma})
%that $p-1=t-1$. Thus $\A(2r)\in \{0,2,4\}$. Moreover, if $\A(2r)=4$ and $p-1=2r$ implies $p=2r+1$, i.e.. $2r+1$ is a
%prime number. \end{proof}

%The following naive remark is addressed to Carmichael's conjecture. It can be taken as another motivation of studying 
%the distribution of totients which are $2^{\l}$ mod. $2^{\l+1}$ for $\l>0$.
%
%\begin{rem}\label{remark}
%%The above lemma implies the Carmichael's conjecture for every even $2r$ with $r$ odd. 
%Let $m=2^k\cdot r$ be any even positive integer with $r$ odd. If $\A(2r)=4$ then $\A(m)\ge 2$. To see this, 
%$$2^k \cdot r=2^{k-1} \cdot 2r=\phi(2^k)\phi(x),$$
%where $x\in \{p,q^t,2p,2q^t\}$ and $t>1$.
%Taking $x=p$ and $x=q^t$ we have that $\phi(x)=m$.
%\end{rem}

Next we study the distribution of totients $2$ modulo $4$. 
Let us start by taking the following useful sets
$$
\mathscr{V}^{1}_{k}=\{2r\,;\, r  \ \mbox{odd and} \ \A(2r)=k\},\mbox{ for each } k=2,4.
$$

\begin{center}
	\begin{table}[H]
		\caption{The number of totients  $2$ mod $4$ $\leq x$ with a fixed multiplicity}
		\begin{tabular}{ccccc}
			$x$ & $\pi(x)$ & $\vert\mathscr{V}_{2}^{1}(x)\vert$ & $\vert\mathscr{V}^{1}_{4}(x)\vert$ & $\vert\mathscr{V}_{2}^{1}(x)\vert/\pi(x)$ \\ \hline
			$10^3+2$ & $168$ & $87$ & $5$ & $0.517857\dots$ \\ \hline
			$10^4+2$ & $1229$ & $625$ & $8$ & $0.508543\dots$ \\ \hline
			$10^5+2$ & $9592$ & $4831$ & $14$ & $0.503648\dots$ \\ \hline
			$10^6+2$ & $78498$ & $39400$ & $20$  & $0.501923\dots$ \\ \hline 
			$10^7+2$ & $664579$ & $332606$ & $34$ & $0.500476\dots$  \\ \hline
			$10^8+2$ & $5761455$ & $2881495$ & $78$  & $0.500133\dots$ \\
		\end{tabular}
	\end{table}
\end{center}

\begin{cor}\label{A}
$\lim_{x\to \infty}\dfrac{\vert\mathscr{V}^{1}_{4}(x)\vert}{\sqrt{x}}=0.$
\end{cor}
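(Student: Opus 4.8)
The plan is to show that every element of $\mathscr{V}^{1}_{4}$ already lives inside the exceptional set $\mathcal{R}_{2}$ from Lemma~\ref{lemmaA}, so that the desired estimate is inherited immediately from that lemma rather than proved afresh.

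First I would invoke Lemma~\ref{lemmaB}: if $\A(2r)=4$, then $\phi^{-1}(2r)$ contains a prime power $q^{m}$ with $m>1$, hence with exponent $m\geq 2$. By the very definition of $\mathcal{R}_{t}$, this says precisely that $2r\in\mathcal{R}_{2}$. Thus one obtains the inclusion
$$\mathscr{V}^{1}_{4}\subseteq\mathcal{R}_{2},$$
and consequently $\vert\mathscr{V}^{1}_{4}(x)\vert\leq\vert\mathcal{R}_{2}(x)\vert$ for every real $x$.

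Next I would apply Lemma~\ref{lemmaA} with $t=2$, which gives $\vert\mathcal{R}_{2}(x)\vert=\oh(\sqrt{x})$. Combining this with the inclusion above yields $\vert\mathscr{V}^{1}_{4}(x)\vert=\oh(\sqrt{x})$, which is exactly the assertion $\lim_{x\to\infty}\vert\mathscr{V}^{1}_{4}(x)\vert/\sqrt{x}=0$.

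The argument is essentially immediate once the inclusion is spotted, since all the quantitative work has already been isolated in Lemma~\ref{lemmaA} via the Prime Number Theorem. The only point that requires any care is reading off from Lemma~\ref{lemmaB} that the multiplicity-$4$ case genuinely forces a prime power of exponent at least $2$ in the fibre; I do not expect a real obstacle beyond verifying that structural fact.
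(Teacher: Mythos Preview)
Your proposal is correct and is exactly the paper's approach: the paper's proof consists of the single line ``Just note that $\mathscr{V}^{1}_{4}(x)\subseteq \mathcal{R}_{2}(x)$,'' which is precisely the inclusion you justify via Lemma~\ref{lemmaB} and then combine with Lemma~\ref{lemmaA}. Your write-up simply spells out the one-line observation in slightly more detail.
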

\begin{proof}
Just note that $\mathscr{V}^{1}_{4}(x)\subseteq \mathcal{R}_{2}(x)$.
\end{proof}

Let us introduce some useful notation, $\mathcal{P}(k,j)=\{p\in \mathcal{P}\,;\ p\equiv j \, \mod k\}$ and $\pi(x;k,j)=|\mathcal{P}(x;k,j)|$. 
It is just easy to see that if $p\in \phi^{-1}(2r)$ with $r\equiv 1\mod 2$, then $p\in \mathcal{P}(4,3).$

\begin{thm}\label{thmA} For every real number $x>0$,
$$\vert\mathscr{V}^{1}_2(x)\vert \sim \frac{\pi(x)}{2}$$
\end{thm}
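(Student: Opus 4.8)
The plan is to set up an almost-bijection between $\mathscr{V}^1_2(x)$ and the set of primes $p\equiv 3 \bmod 4$ with $p\le x+1$, and then to invoke the prime number theorem in arithmetic progressions. The guiding observation, already recorded above, is that a prime $p$ lying in some $\phi^{-1}(2r)$ with $r$ odd must satisfy $p\equiv 3\bmod 4$; conversely, every prime $p\equiv 3\bmod 4$ produces the totient $p-1=\phi(p)\equiv 2\bmod 4$, whose halved value $(p-1)/2$ is odd, so $p-1\in\mathscr{V}^1$. By Lemma \ref{lemmaB} the multiplicity $\A(p-1)$ is then either $2$ or $4$, so the map $p\mapsto p-1$ carries the primes $p\equiv 3\bmod 4$ with $p\le x+1$ into $\mathscr{V}^1_2(x)\cup\mathscr{V}^1_4(x)$.

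Next I would pin the correspondence down precisely using the description of the fibres in Lemma \ref{lemmaB}. I would split $\mathscr{V}^1_2(x)$ according to the exponent $n$ occurring in $\phi^{-1}(2r)=\{p^{n},2p^{n}\}$. When $n=1$ we have $2r=p-1$ with $p=2r+1$ prime, and these totients correspond bijectively, via $2r\mapsto 2r+1$, to the primes $p\equiv 3\bmod 4$, $p\le x+1$, for which $\A(p-1)=2$. When $n\ge 2$ the fibre contains a prime power $p^{n}$ with $n\ge 2$, hence $2r\in\mathcal{R}_{2}(x)$; so the $n\ge 2$ part of $\mathscr{V}^1_2(x)$ is exactly $\mathscr{V}^1_2(x)\cap\mathcal{R}_2(x)$, and Lemma \ref{lemmaA} (with $t=2$) bounds it by $\oh(\sqrt{x})$. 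Likewise, each $2r\in\mathscr{V}^1_4(x)$ has, by Lemma \ref{lemmaB}, a prime power $q^{m}$ with $m>1$ in its fibre, whence $\mathscr{V}^1_4(x)\subseteq\mathcal{R}_2(x)$ and Corollary \ref{A} gives $|\mathscr{V}^1_4(x)|=\oh(\sqrt{x})$.

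Combining these counts, the primes $p\equiv 3\bmod 4$ with $p\le x+1$ partition into those with $\A(p-1)=2$ (matching the $n=1$ part of $\mathscr{V}^1_2(x)$) and those with $\A(p-1)=4$ (matching $\mathscr{V}^1_4(x)$), so that
\begin{equation*}
|\mathscr{V}^1_2(x)|=\pi(x+1;4,3)-|\mathscr{V}^1_4(x)|+|\mathscr{V}^1_2(x)\cap\mathcal{R}_2(x)|=\pi(x+1;4,3)+\oh(\sqrt{x}).
\end{equation*}
Finally, the prime number theorem in the progression $3\bmod 4$ yields $\pi(x+1;4,3)\sim\tfrac12\pi(x)$, and since $\pi(x)\asymp x/\log x$ dominates $\sqrt{x}$, the error is negligible, giving $|\mathscr{V}^1_2(x)|\sim\pi(x)/2$. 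The only genuinely non-elementary input is the equidistribution of primes in the two residue classes modulo $4$; everything else is bookkeeping of the fibre structure of Lemma \ref{lemmaB} together with the elementary bound of Lemma \ref{lemmaA}. The main point to watch is that both exceptional families — prime-power totients with exponent $\ge 2$ and multiplicity-$4$ totients — are simultaneously controlled by the same $\oh(\sqrt{x})$ estimate, which sits comfortably below the main term of order $x/\log x$; I expect this error control, rather than the final asymptotic, to be where the care is needed.
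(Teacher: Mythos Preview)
Your argument is correct and follows essentially the same route as the paper: both isolate the main contribution as the primes $p\equiv 3\bmod 4$ via the map $p\mapsto p-1$, handle the two exceptional families (multiplicity-$4$ totients and prime-power fibres with exponent $\ge 2$) by the $\oh(\sqrt{x})$ bound of Lemma~\ref{lemmaA}, and finish with the prime number theorem in arithmetic progressions. Your bookkeeping via the explicit identity $|\mathscr{V}^1_2(x)|=\pi(x+1;4,3)-|\mathscr{V}^1_4(x)|+|\mathscr{V}^1_2(x)\cap\mathcal{R}_2(x)|$ is in fact a bit cleaner than the paper's $\mathcal{A}/\mathcal{B}$ split, but the substance is identical.
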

\begin{proof}
Let us consider the subset $\mathcal{A}$ of $\mathscr{V}^{1}$ given by the elements $m\equiv 2\mod 4$ 
such that $m+1$ is prime, and $\mathcal{B}:=\mathscr{V}^{1}\setminus\mathcal{A}$.
Thus
$$\vert \mathscr{V}^{1}_2(x)\vert=\vert \mathcal{A}(x)\vert+ \vert\mathcal{B}(x) \vert.$$
Moreover, from the above two Lemmas, $\vert\mathcal{B}(x) \vert=\oh(\sqrt{x})$. Since $\sqrt{x}=\oh(\pi(x))$, 
the Prime Number Theorem implies that $\vert\mathcal{B}(x) \vert=\oh(\pi(x))$. Hence
\begin{equation}
  \liminf_{x\to \infty}\frac{\vert \mathscr{V}^{1}_2(x)\vert}{\pi(x)}=\liminf_{x\to \infty}\frac{\vert \mathcal{A}(x)\vert}{\pi(x)} \ \mbox{and} 
  \ \limsup_{x\to \infty}\frac{\vert \mathscr{V}^{1}_2(x)\vert}{\pi(x)}=\limsup_{x\to \infty}\frac{\vert \mathcal{A}(x)\vert}{\pi(x)}.
\end{equation}  
We claim that $$\vert\mathcal{A}(x)\vert\sim \frac{\pi(x)}{2}.$$
Since $m+1$ must be a prime equivalent to $3$ module $4$, we get
$$\{m+1\in \mathcal{P}(x+1;4,3)\}=\mathcal{A}(x)\cup \{m+1\in \mathcal{P}(x+1;4,3); \A(m)=4\}.$$
Thus
$$\pi(x;4,3)=\vert\mathcal{A}(x)\vert+\vert\{m+1\in \mathcal{P}(x+1;4,3); \A(m)=4\}\vert.$$
We know that $\vert\{m+1\in \mathcal{P}(x+1;4,3); \A(m)=4\}\vert=\oh(\pi(x))$.
Now, the Prime Number Theorem in Arithmetic Progression assure that
$$\pi(x;4,3)\sim \frac{\pi(x)}{2}.$$
Hence
$$\lim_{x\to \infty}\dfrac{\vert\mathrm{A}(x)\vert}{\pi(x)}=\dfrac{1}{2}.$$
This finishes the claim and proves the theorem.
\end{proof}

From the above Theorem and the Corollary \ref{A} we have
$$\lim_{x\to \infty}\dfrac{\mathrm{V}^{1}(x)}{\pi(x)}=\lim_{x\to \infty}\dfrac{\vert\mathscr{V}^{1}_2(x)\vert+\vert\mathscr{V}^{1}_4(x)\vert}{\pi(x)}=\lim_{x\to \infty}\dfrac{\vert\mathscr{V}^{1}_2(x)\vert}{\pi(x)},$$
and so get the main result of this section.
\begin{cor}\label{main}
$\vert\mathscr{V}^{1}(x)\vert\sim \dfrac{\pi(x)}{2}.$
\end{cor}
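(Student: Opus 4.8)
The plan is to combine the two preceding results by way of the disjoint decomposition of $\mathscr{V}^{1}$ supplied by Lemma~\ref{lemmaB}. First I would observe that every element of $\mathscr{V}^{1}$ is of the form $2r$ with $r$ odd and $\A(2r)\neq 0$, and that Lemma~\ref{lemmaB} forces $\A(2r)\in\{2,4\}$. Since the multiplicity of a given totient is a well-defined number, the sets $\mathscr{V}^{1}_{2}$ and $\mathscr{V}^{1}_{4}$ are disjoint and exhaust $\mathscr{V}^{1}$, whence for every $x$
$$
\mathrm{V}^{1}(x)=\vert\mathscr{V}^{1}(x)\vert=\vert\mathscr{V}^{1}_{2}(x)\vert+\vert\mathscr{V}^{1}_{4}(x)\vert .
$$

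Next I would divide through by $\pi(x)$ and treat the two summands separately, writing
$$
\frac{\mathrm{V}^{1}(x)}{\pi(x)}=\frac{\vert\mathscr{V}^{1}_{2}(x)\vert}{\pi(x)}+\frac{\vert\mathscr{V}^{1}_{4}(x)\vert}{\sqrt{x}}\cdot\frac{\sqrt{x}}{\pi(x)} .
$$
For the second term I would invoke Corollary~\ref{A}, which gives $\vert\mathscr{V}^{1}_{4}(x)\vert/\sqrt{x}\to 0$, together with the Prime Number Theorem, under which $\pi(x)\sim x/\log x$ grows faster than $\sqrt{x}$, so that $\sqrt{x}/\pi(x)\to 0$ as well; the product of the two factors therefore tends to $0$. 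For the first term I would apply Theorem~\ref{thmA}, which yields $\vert\mathscr{V}^{1}_{2}(x)\vert/\pi(x)\to 1/2$.

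Passing to the limit then gives $\mathrm{V}^{1}(x)/\pi(x)\to 1/2$, which is exactly the assertion $\vert\mathscr{V}^{1}(x)\vert\sim\pi(x)/2$. I do not expect any genuine obstacle here: the statement is a purely formal consequence of Theorem~\ref{thmA} and Corollary~\ref{A}, the only point requiring a moment's care being to confirm that the $\mathscr{V}^{1}_{4}$ contribution is negligible relative to $\pi(x)$, which is immediate once the $\sqrt{x}$ scale is inserted as an intermediate comparison.
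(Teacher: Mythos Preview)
Your proposal is correct and follows essentially the same route as the paper: decompose $\mathscr{V}^{1}(x)$ as $\mathscr{V}^{1}_{2}(x)\cup\mathscr{V}^{1}_{4}(x)$ via Lemma~\ref{lemmaB}, invoke Theorem~\ref{thmA} for the main term and Corollary~\ref{A} (together with $\sqrt{x}=\oh(\pi(x))$) to kill the $\mathscr{V}^{1}_{4}$ contribution. The only difference is cosmetic---you make the intermediate $\sqrt{x}$ comparison explicit, whereas the paper leaves it implicit in the line $\lim \mathrm{V}^{1}(x)/\pi(x)=\lim \vert\mathscr{V}^{1}_{2}(x)\vert/\pi(x)$.
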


\begin{rem}
It follows from the above Theorem that for each $k>1$
$$\vert\{m\in\mathscr{V}^{k}(x)\,;\, \A(m)\ge 2\}\vert\gg \dfrac{\pi(x)}{2^k}.$$ %because $\pi(2x)\sim 2\pi(x)$ and $k$ is a fixed integer.
\end{rem}

\begin{cor}\label{corA}
$\vert\mathscr{V}^{1}_4(x)\vert=\oh(\vert\mathscr{V}^{1}_2(x) \vert).$
\end{cor}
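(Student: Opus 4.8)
The plan is simply to chain together the two asymptotic estimates already established in this section; no new input is needed. First I would invoke Corollary \ref{A}, which gives $\vert\mathscr{V}^{1}_4(x)\vert=\oh(\sqrt{x})$. The only auxiliary fact to record is that $\sqrt{x}$ grows strictly slower than $\pi(x)$: by the Prime Number Theorem $\pi(x)\sim x/\log x$, so
\begin{equation*}
\frac{\sqrt{x}}{\pi(x)}\sim\frac{\log x}{\sqrt{x}}\longrightarrow 0,
\end{equation*}
that is, $\sqrt{x}=\oh(\pi(x))$. Finally Theorem \ref{thmA} supplies $\vert\mathscr{V}^{1}_2(x)\vert\sim\pi(x)/2$, whence $\pi(x)/\vert\mathscr{V}^{1}_2(x)\vert\to 2$.

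To assemble these ingredients I would factor the ratio of interest as a product of three pieces whose limiting behaviour is controlled by the three statements above,
\begin{equation*}
\frac{\vert\mathscr{V}^{1}_4(x)\vert}{\vert\mathscr{V}^{1}_2(x)\vert}=\frac{\vert\mathscr{V}^{1}_4(x)\vert}{\sqrt{x}}\cdot\frac{\sqrt{x}}{\pi(x)}\cdot\frac{\pi(x)}{\vert\mathscr{V}^{1}_2(x)\vert}.
\end{equation*}
As $x\to\infty$ the first factor tends to $0$ by Corollary \ref{A}, the second tends to $0$ by the Prime Number Theorem as just noted, and the third tends to $2$ by Theorem \ref{thmA}. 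Consequently the whole product tends to $0$, which is precisely the assertion $\vert\mathscr{V}^{1}_4(x)\vert=\oh(\vert\mathscr{V}^{1}_2(x)\vert)$.

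Honestly, this corollary presents no real obstacle: it is a formal consequence of Corollary \ref{A} and Theorem \ref{thmA}, the substance of the argument having already been done in proving those two results. If anything, the only point worth stating explicitly is the comparison $\sqrt{x}=\oh(\pi(x))$, since it is what converts the square-root savings of Corollary \ref{A} into a genuine little-$\oh$ against the main term $\pi(x)/2$. I would therefore keep the write-up to the single displayed factorization above followed by the three limit evaluations.
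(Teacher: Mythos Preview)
Your argument is correct and follows essentially the same route as the paper: combine the bound $\vert\mathscr{V}^{1}_4(x)\vert=\oh(\sqrt{x})$ from Corollary~\ref{A} with the asymptotic $\vert\mathscr{V}^{1}_2(x)\vert\sim\pi(x)/2\sim x/(2\log x)$ from Theorem~\ref{thmA} and the Prime Number Theorem. The only cosmetic difference is that the paper compresses your three-factor product into a single $\Oh(\log x/\sqrt{x})$ estimate.
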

\begin{proof}
Since $\vert\mathscr{V}^{1}_2(x)\vert \sim \dfrac{x}{2\log x}$ and $\vert\mathscr{V}^{1}_4(x)\vert=\oh(\sqrt{x})$, we get
$$\frac{\vert \mathscr{V}^{1}_4(x)\vert}{\vert\mathscr{V}^{1}_2(x)\vert}=\Oh\left(\frac{\sqrt{x}\log x}{x}\right)=\Oh\left(\frac{\log x}{\sqrt{x}}\right).$$
Therefore,
$$\lim_{x\to \infty}\frac{\vert\mathscr{V}^{1}_4(x)\vert}{\vert\mathscr{V}^{1}_2(x)\vert}=0.$$
\end{proof}

\section{When $\l\geq 2$}

The next natural step is try to answer the Questions \ref{quest2} and 
\ref{quest1} when $\l=2$, i.e. considering totients 
that are $4\mod 8$. So are allowed until two odd prime divisors
in the pre-image of such totients, hence the 
computations become more involved. The first examples suggest that the 
multiplicities of totient in $\mathscr{V}^{2}$ can assume very large values. For 
example, using a simple computer one can see in a few minutes that the 
multiplicities of totients in $\mathscr{V}^{2}(10^7)$ assume all values 
between $2$ and $35$ and the biggest one is $42$, there is some gaps between
$35$ and $42$ for such $x$. In a couple of hours one 
can see that all numbers between $2$ and $72$ are realized as 
multiplicities of totients in $\mathscr{V}^{2}(3\cdot 10^9+2\cdot10^6)$, 
and the maximal multiplicity attained is $94$. Doing the same 
computations for $\l=3,4,5,6,7$ very 
large multiplicities also appear, but as $\l$ increases, large multiplicities are 
attained by even smaller totients. For example, all numbers between 
$2$ and $82$ are realized as the multiplicities of 
totients in $\mathscr{V}^{3}(10^6)$ and the biggest multiplicity is $169$, see Table ~\ref{tabelamax}.
\begin{center}
	\begin{table}[H]
		\caption{maximal multiplicity in $\mathscr{V}^{\l}(x)$}
		\begin{tabular}{cccccc}
			 & $x=10^6$ & $x=5\cdot10^6$ & $x=10^7$ & $x=5\cdot10^7$  \\ \hline
			$\max\mathrm{A}(\mathscr{V}^{2}(x))$ & 32 & 34 & 42 & 57 \\ \hline
			$\max\mathrm{A}(\mathscr{V}^{3}(x))$ & 169 & 250 & 277 & 427 \\ \hline
			$\max\mathrm{A}(\mathscr{V}^{4}(x))$ & 463 & 745 & 860 & 1427 \\ \hline
			$\max\mathrm{A}(\mathscr{V}^{5}(x))$ & 998 & 1804 & 1961 & 3732 \\ \hline
			$\max\mathrm{A}(\mathscr{V}^{6}(x))$ & 1401 & 2222 & 3887 & 6239 \\ \hline
			$\max\mathrm{A}(\mathscr{V}^{7}(x))$ & 1375 & 3258 & 4076 & 7807 \\
		\end{tabular}
		\label{tabelamax}
	\end{table}
\end{center}
Now, it is only natural to make a week version of Question \ref{quest1} in the following way.

\begin{ques}\label{q1}
Is $\limsup_{m\in\mathscr{V}^{\l}}\mathrm{A}(m)=\infty$ for each $\l\geq 2$?
\end{ques}

Next we see an elementary result concerning the above Question \ref{q1}.

\begin{prop}\label{cond1} If there exists $\l_0$ such that $\limsup_{m\in\mathscr{V}^{\l_0}}\mathrm{A}(m)=\infty$, then for each $n\geq 0$,
$\limsup_{m\in\mathscr{V}^{\l_0+n}}\mathrm{A}(m)=\infty$.
\end{prop}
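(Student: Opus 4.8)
The plan is to reduce the whole statement to a single multiplication-by-$2$ step, namely the inequality $\A(2m)\ge \tfrac12\A(m)$ valid for every totient $m$, and then to iterate it $n$ times and combine with the fact that multiplying by $2^{n}$ raises the $2$-adic valuation by exactly $n$.

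First I would fix a totient $m$ and analyse the $2$-adic structure of its fibre $\phi^{-1}(m)$. Writing each preimage as $x=2^{a}u$ with $u$ odd and $a\ge 0$, I would split $\phi^{-1}(m)$ into the odd preimages (those with $a=0$) and the even ones (those with $a\ge 1$). The elementary identity $\phi(2^{a}u)=2^{a-1}\phi(u)$ for $a\ge 1$ supplies the two facts I need. On the one hand, if $u$ is an odd preimage then $\phi(2u)=\phi(u)=m$, so $u\mapsto 2u$ is an injection from the odd preimages into the even ones; hence the odd preimages number at most $\tfrac12\A(m)$, and therefore there are at least $\tfrac12\A(m)$ even preimages. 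On the other hand, if $x=2^{a}u$ with $a\ge 1$ is an even preimage, then $\phi(2x)=2^{a}\phi(u)=2\phi(x)=2m$, so the map $x\mapsto 2x$ carries the even preimages of $m$ into $\phi^{-1}(2m)$. Since $x\mapsto 2x$ is injective, this yields $\A(2m)\ge \#\{\text{even preimages of }m\}\ge \tfrac12\A(m)$.

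Next I would iterate: applying the inequality to $m,2m,4m,\dots$ in turn gives $\A(2^{n}m)\ge 2^{-n}\A(m)$ for every $n\ge 0$ and every totient $m$ (at each stage $2^{k}m$ is again a totient, as its fibre is nonempty as soon as that of $m$ is). I would then observe that if $m\in\mathscr{V}^{\l_0}$, i.e. $v_2(m)=\l_0$, then $v_2(2^{n}m)=\l_0+n$, so $2^{n}m\in\mathscr{V}^{\l_0+n}$.

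Finally, invoking the hypothesis $\limsup_{m\in\mathscr{V}^{\l_0}}\A(m)=\infty$, I would pick a sequence $(m_i)$ in $\mathscr{V}^{\l_0}$ with $\A(m_i)\to\infty$. For each fixed $n\ge 0$ the numbers $2^{n}m_i$ lie in $\mathscr{V}^{\l_0+n}$ and satisfy $\A(2^{n}m_i)\ge 2^{-n}\A(m_i)\to\infty$, whence $\limsup_{m\in\mathscr{V}^{\l_0+n}}\A(m)=\infty$, as required. The only point demanding real care is the bookkeeping in the first step, namely verifying that discarding the odd preimages costs at most a factor of $2$; once that multiplication-by-$2$ lemma is in hand, the iteration, the passage between strata, and the limit argument are all routine.
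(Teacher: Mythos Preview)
Your proof is correct, but it follows a different route from the paper's. The paper's argument is: given $m\in\mathscr{V}^{\l_0}$ with $\A(m)=k$, pick a prime $p\equiv 3\pmod 4$ not dividing any element of $\phi^{-1}(m)$ (such $p$ exists since the fibre is finite); then $\phi(px_i)=(p-1)m$ for each of the $k$ preimages $x_i$, so $\A((p-1)m)\ge k$, and since $v_2(p-1)=1$ one lands in $\mathscr{V}^{\l_0+1}$. Iterating moves up one stratum at a time with \emph{no} loss in multiplicity. Your approach instead multiplies by $2$ and accepts the factor-of-two loss $\A(2m)\ge\tfrac12\A(m)$; this is simpler (no auxiliary prime needed) and is in fact exactly the mechanism the paper uses later in Lemma~\ref{lemal0n} for the counting functions $\mathrm{S}^{\l}$. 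What the paper's choice buys is the sharper inequality $\A\bigl((p-1)^n m\bigr)\ge \A(m)$ rather than $\A(2^n m)\ge 2^{-n}\A(m)$, but for the purpose of proving an unbounded $\limsup$ either bound is perfectly adequate.
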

\begin{proof}
Take $m\in\mathscr{V}^{\l_0}$ such that $\mathrm{A}(m)=k$.
For each prime number $p\equiv 3 \mod 4$ not dividing any
element of $\phi^{-1}(m)$ it follows $\mathrm{A}((p-1)m)\geq k$ and we are done.
\end{proof}

The above Proposition shows that we should prove the existence of a suitable 
$\l_0$, ideally the smallest one, where $\limsup_{m\in\mathscr{V}^{\l_0}}\mathrm{A}(m)=\infty$.
Now we give a conditional proof that such $\l_0=2$ using Dickson's 
$k$-tuples Conjecture, c.f. \cite{Dick}.

\begin{thm}\label{Dick2}
Dickson's  $k$-tuples Conjecture implies that $\l_0=2$.
\end{thm}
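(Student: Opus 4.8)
The plan is to prove $\ell_0 = 2$ in two moves: show $\ell_0 \geq 2$ unconditionally, then use Dickson's Conjecture to force $\limsup_{m\in\mathscr{V}^{2}}\mathrm{A}(m)=\infty$, which together with Proposition \ref{cond1} gives $\ell_0=2$. The lower bound is immediate from Lemma \ref{lemmaB}: every $m\in\mathscr{V}^{1}$ has $\mathrm{A}(m)\in\{0,2,4\}$, so $\limsup_{m\in\mathscr{V}^{1}}\mathrm{A}(m)\leq 4<\infty$ and hence $\ell_0\neq 1$.

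For the upper bound I fix an arbitrary $k\geq 1$ and construct a single totient $m\equiv 4 \pmod 8$ with $\mathrm{A}(m)\geq k$. The idea is to realize $m$ as $\phi(pq)=(p-1)(q-1)$ for $k$ distinct unordered prime pairs $\{p,q\}$ with $p\equiv q\equiv 3\pmod 4$. Concretely I choose an odd integer $c$ with at least $k$ factorizations $c=d_ie_i$ ($1\le i\le k$) into (necessarily odd) factors $d_i<e_i$, and I seek a parameter $n\equiv 2\pmod 4$ such that, for every $i$, both
$$p_i:=d_i n+1 \quad\text{and}\quad q_i:=e_i n+1$$
are prime. Writing $n=2u$ with $u$ odd, each $p_i=2d_iu+1\equiv 3\pmod 4$ and likewise $q_i$, while $m:=cn^2=4cu^2$ has $v_2(m)=2$, so $m\in\mathscr{V}^{2}$. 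For such an $n$ one gets $\phi(p_iq_i)=d_ie_i n^2=cn^2=m$, and the $k$ semiprimes $p_iq_i$ are pairwise distinct because distinct factorizations yield distinct prime pairs; hence $\mathrm{A}(m)\geq k$, and letting $k\to\infty$ gives the claim.

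It remains to produce infinitely many admissible $n$, and this is exactly where Dickson's Conjecture enters. Substituting $u=2s+1$ turns the $2k$ quantities above into $2k$ linear forms in $s$ with positive leading coefficients $4d_i,4e_i$, and Dickson's Conjecture yields infinitely many $s$ making all of them simultaneously prime, provided the tuple is admissible (has no fixed prime divisor). The prime $2$ is never an obstruction since every form is odd; for an odd prime $\wp$, the form $d_in+1$ reduces to $1\pmod\wp$ whenever $\wp\mid d_i$ and otherwise vanishes for a single residue of $s$, so each $\wp$ excludes only boundedly many residues, and every $\wp>2k$ is automatically unobstructed.

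The main obstacle is precisely verifying admissibility for the finitely many small primes $\wp\le 2k$: one must ensure the $2k$ forms do not cover all residues modulo such a $\wp$. I expect to handle this by choosing $c$ divisible by the small primes so that, in each pair $(d_i,e_i)$, enough factors are $\equiv 0\pmod\wp$ (making the corresponding forms $\equiv 1$ and hence harmless), leaving a free residue class for $s$; if needed one first restricts $s$ to a suitable residue class modulo $\prod_{\wp\le 2k}\wp$ and reapplies Dickson. Once admissibility is secured, Dickson delivers the desired $n$, completing the proof that $\limsup_{m\in\mathscr{V}^{2}}\mathrm{A}(m)=\infty$ and therefore that $\ell_0=2$.
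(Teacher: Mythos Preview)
Your approach is essentially the paper's: produce $k$ prime pairs $(p_i,q_i)$ with $p_i\equiv q_i\equiv 3\pmod 4$ and $(p_i-1)(q_i-1)$ independent of $i$, via Dickson. The paper simply makes a concrete choice where you leave things general. It sets $f_i(n)=1+2\cdot 3^i5^{k+1-i}(2n+1)$ and $g_i(m)=1+2\cdot 3^{k+1-i}5^{i}(2m+1)$, applies Dickson separately to $(f_i)_i$ in $n$ and to $(g_i)_i$ in $m$, and takes $p_i=f_i(n)$, $q_i=g_i(m)$; then $(p_i-1)(q_i-1)=4\cdot 3^{k+1}5^{k+1}(2n+1)(2m+1)\in\mathscr{V}^2$ for all $i$. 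This is exactly your scheme with $c=3^{k+1}5^{k+1}$ and the factorizations $d_i=3^i5^{k+1-i}$, $e_i=3^{k+1-i}5^{i}$, except that the paper uses two independent parameters instead of one.

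Your ``main obstacle'' (admissibility at small primes) is in fact not an obstacle at all, and you are making it harder than necessary. For your forms $2d_i(2s+1)+1$ and $2e_i(2s+1)+1$: given any odd prime $\wp$, choose $s$ with $2s+1\equiv 0\pmod \wp$; then \emph{every} form is $\equiv 1\pmod\wp$, regardless of $d_i,e_i$. So no choice of $c$ ``divisible by small primes'' is needed, and any odd $c$ with $k$ factorizations $d_i<e_i$ (e.g.\ $c=3^{2k}$) already works. This is precisely the mechanism behind the paper's one-line admissibility check. Once you observe this, your proof is complete; the paper's two-variable split and its specific $3$--$5$ coefficients are convenient but not essential.
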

\begin{proof} 
%\textbf{Ideia:} Seja $\phi(m)\equiv 4(\mod. 8)$
% Fixemos primos $p_1,p_2,...,p_k\equiv 3 (\mod. 4)$.
%
%\vspace{0.2cm}
%
%\textbf{Afirma\c c\~ao 1:} A equa\c c\~ao $\dfrac{\phi(m)}{p_j-1}=q_j-1$ tem solu\c c\~ao com $q_j$ primo.
%
%\textbf{Prova:} N\~ao \'e claro. Mas \'e claro o seguinte
%
%\vspace{0.2cm}
%
%\textbf{Afirma\c c\~ao 2:} $\dfrac{\phi(m)}{p_j-1}=q_j^{\alpha_{j-1}}(q_j-1)$ para $q_j\equiv 3 (\mod. 4)$.
%
%
%\bigskip

We start by taking a positive integer $k$ and consider the following two sequences of linear forms
\begin{eqnarray*}
f_i(n):=1+2\cdot 3^i\cdot5^{k+1-i}(2n+1) & \mbox{and} &  g_i(m):=1+2\cdot 3^{k+1-i}\cdot 5^{i}(2m+1)
\end{eqnarray*} where $i=1,\dots, k$. Now let us take a prime number $p$. If we assume that $p$ divides both $f(p)$ and $f(p+1)$ 
(respectively, $g(p)$ and $g(p+1)$) then $p$ must be $2$, $3$ or $5$ and so $p$ divides $1$, that is a contradiction.
So the above two sequences are admissible, c.f. \cite{Rib}. Hence
Dickson's $k$-tuples Conjecture implies that there are infinitely many positive integers $n$ and $m$ such that
$p_1:=f_1(n),\dots, p_k:=f_k(n)$ and $q_1:=g_1(m),\dots,q_k:=g_k(m)$ are all prime numbers.
Thus for all $1\leq i,j\leq k$ we have
\begin{equation*}
m:=\phi(p_i\cdot q_i)=(p_i-1)(q_i-1)=(p_j-1)(q_j-1)=\phi(p_j\cdot q_j)
\end{equation*} and so $\mathrm{A}(m)\geq k$ with $m\in\mathcal{V}^2$.
%
%\bigskip
%
%We starting by choosing $p_1,\dots, p_k$ and $q_1,\dots,q_k$ two set with $k$ primes numbers 
%that are equivalent to $3\mod 4$
%and such that $$(p_i-1)(q_i-1)=(p_j-1)(q_j-1)\,\forall\,1\leq i,j\leq k.$$
%Now choose $n_1,\dots,n_k$ positive integers that are multiples of $4$, where $(n_i,q_i)=1$ for  $i=1,\dots,k$ and such that
%$(p_i-1)n_i=(p_j-1)n_j\ \forall\,1\leq i,j\leq k.$ Now Dickson's $k$-tuples Conjecture implies that the $k$-tuples
%$n_1n+q_1,\dots,n_kn+q_k$ is admissible and so that are infinitely many positive integers $n$ such that 
%$n_1n+q_1,\dots,n_kn+q_k$ are all prime numbers. Thus
%\begin{align*}
%m:=\phi(p_i(n_in+q_i))= (p_i-1)(q_i-1)+(p_i-1)n_in= \\  (p_j-1)(q_j-1)+(p_j-1)n_jn =  \phi(p_j(n_jn+q_j)),
%\end{align*} concluding that $\mathrm{A}(m)\geq k$.
\end{proof}

The remaining of this paper is devoted to give a sufficient condition on the 
existence of a suitable $\l_0$ using a completely different approach.
%under which if Question 
%\ref{q1} is true for some $\l_0$, then is also true for all
%$\l\geq\l_0$.
We start by considering a family of suitable
functions $\mathrm{S}^{k}$, with $k\in\N$, such that for each 
$x\in\mathbb{R}$ its value by $\mathrm{S}^{k}$ is the sum
of all multiplicities of the totients in $\mathscr{V}^{k}(x)$, namely
$\mathrm{S}^{k}(x):=\sum_{m\in\mathscr{V}^{k}(x)}\mathrm{A}(m)$.

%
%A much less difficult question is
%
%\begin{ques}
%Given $B\subset \mathbb{N}$ is true that
%$$\limsup_{m\in\mathscr{V}^B}\mathrm{A}(m)=\infty,$$
%where $\mathscr{V}^B=\cup_{k\in B}\mathscr{V}^k$?
%\end{ques}
%
%If $B$ is infinity this is trivial. 
%
%!!!!!!!!!!!!! In fact, just note that LEMBRAR DE ESCREVER ESSA PROVA COMO UM SIMPLES COMENTÁRIO !!!!!!!!!!!!!!!!!

%It is clear that the above Lemma just point out that Question
%\ref{q1} should be workable. It is very known that Dickson's 
%$k$tuples Conjecture is not known 
%even for the simplest case $k=2$, that also implies the Twin Primes 
%Conjecture. 

%The remaining of this paper is devoted to give
%another sufficient condition, we hope it is reasonable, under which if Question 
%\ref{q1} is true for some $\l_0$, then is also true for all
%$\l\geq\l_0$. We start by considering a family of suitable
%functions $\mathrm{S}^{k}$, with $k\in\N$, such that for each 
%$x\in\mathbb{R}$ its value by $\mathrm{S}^{k}$ is the sum
%of all multiplicities of the totients in $\mathscr{V}^{k}(x)$, namely
%$\mathrm{S}^{k}(x):=\sum_{m\in\mathscr{V}^{k}(x)}\mathrm{A}(m)$.

\begin{thm}\label{t.2}
If there is a function $f:\mathbb{R}\rightarrow \mathbb{R}$ such that
\begin{enumerate}
\item[i.] $\liminf_{x\to \infty}f(x)=\infty$, and
\item[ii.] $\mathrm{S}^{\l}(x_i)\geq V^{\l}(x_i) \cdot f(x_i)$, for some 
increasing sequence $(x_i)_{i\in \mathbb{N}}$,
\end{enumerate} then $\limsup_{m\in\mathscr{V}^{\l}}\A(m)=\infty.$
\end{thm}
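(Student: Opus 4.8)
The plan is to run a simple averaging (pigeonhole) argument: since $\mathrm{S}^{\l}(x)$ is by definition the \emph{sum} of the multiplicities $\mathrm{A}(m)$ over the $\mathrm{V}^{\l}(x)$ totients $m\in\mathscr{V}^{\l}(x)$, the largest multiplicity occurring among these totients cannot be smaller than their average. For each index $i$, set
$$M_i:=\max_{m\in\mathscr{V}^{\l}(x_i)}\mathrm{A}(m),$$
a maximum that is attained because $\mathscr{V}^{\l}(x_i)$ is a finite set of integers not exceeding $x_i$. Bounding each summand in the definition of $\mathrm{S}^{\l}$ by $M_i$ gives at once
$$\mathrm{S}^{\l}(x_i)=\sum_{m\in\mathscr{V}^{\l}(x_i)}\mathrm{A}(m)\le \mathrm{V}^{\l}(x_i)\cdot M_i.$$

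First I would combine this with hypothesis (ii). For all sufficiently large $i$ one has $\mathrm{V}^{\l}(x_i)>0$ (the stratum is nonempty once $x_i\ge 2^{\l}$), so dividing through yields
$$M_i\ \ge\ \frac{\mathrm{S}^{\l}(x_i)}{\mathrm{V}^{\l}(x_i)}\ \ge\ f(x_i).$$
Hypothesis (i) then forces $M_i\to\infty$ as $i\to\infty$: along the sequence $(x_i)$ there are totients in $\mathscr{V}^{\l}$ of arbitrarily large multiplicity.

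It remains to upgrade ``$M_i\to\infty$'' to the statement $\limsup_{m\in\mathscr{V}^{\l}}\mathrm{A}(m)=\infty$, and this bookkeeping is the only point requiring a little care. Let $m_i\in\mathscr{V}^{\l}(x_i)$ realize $M_i$, so that $\mathrm{A}(m_i)=M_i\to\infty$. The maximizers $m_i$ cannot remain bounded: since $\phi(n)\to\infty$ as $n\to\infty$, each preimage $\phi^{-1}(m)$ is finite, so $\sup\{\mathrm{A}(m)\,;\,m\le B\}<\infty$ for every bound $B$, which would contradict $\mathrm{A}(m_i)\to\infty$ if the $m_i$ stayed below some fixed $B$. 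Hence $m_i\to\infty$ through $\mathscr{V}^{\l}$ while $\mathrm{A}(m_i)\to\infty$, and this is precisely $\limsup_{m\in\mathscr{V}^{\l}}\mathrm{A}(m)=\infty$.

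The substantive content of the theorem lies entirely in its hypotheses; the proof itself is just the observation that a maximum dominates an average. Consequently I expect no genuine obstacle beyond the two routine verifications flagged above, namely that the stratum $\mathscr{V}^{\l}(x_i)$ is eventually nonempty (so the ratio $\mathrm{S}^{\l}(x_i)/\mathrm{V}^{\l}(x_i)$ makes sense) and that the multiplicity-maximizers escape to infinity (so that one obtains a $\limsup$ over $\mathscr{V}^{\l}$ rather than a supremum over a bounded range).
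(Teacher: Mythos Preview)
Your proof is correct and follows essentially the same approach as the paper: both rest on the inequality $\mathrm{S}^{\l}(x)\le \mathrm{V}^{\l}(x)\cdot\max_{m\in\mathscr{V}^{\l}(x)}\mathrm{A}(m)$ and then use hypotheses (i) and (ii) to force the maximum to be unbounded. The paper phrases this as a contradiction (assuming $\mathrm{A}(m)=\oh(f(m))$ and deriving $\mathrm{S}^{\l}(x)/(f(x)\mathrm{V}^{\l}(x))=\oh(1)$), whereas you argue directly and are more explicit about the bookkeeping, but the substance is the same.
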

\begin{proof}
We start by noting that
	$$\mathrm{S}^{\l}(x)=\sum_{m\in\mathscr{V}^{\l}(x)}\mathrm{A}(m)\leq V^{\l}(x)\cdot \max\{\A(m); m\in \mathscr{V}^{\l}(x)\}.$$
If we assume that $\A(m)=\oh(f(m))$ with $m\in \mathscr{V}^{\l}(x)$, then 
\begin{equation}\label{ozinho1}
\dfrac{\mathrm{S}^{\l}(x)}{f(x)\cdot V^{\l}(x)}=\oh(1).
\end{equation}Now, condition (ii) ensures that
	$$\liminf_{x\to \infty}\dfrac{\mathrm{S}^{\l}(x)}{f(x)\cdot V^{\l}(x)}\geq 1,$$
which contradicts equation \eqref{ozinho1}. Hence $\A(n)\neq \oh(f(n))$, and
from condition (1) we are done. 
%Since $\liminf_{x\to \infty}f(x)=\infty$, in particular we have
%	$$\limsup_{n\to \infty, n\in \mathscr{V}^{k}}\A(n)=\infty.$$	
\end{proof}

Of course that we seek for some function $f$ satisfying the 
hypothesis of the above Theorem \ref{t.2}. The first step is try
to  measure the magnitude of $\mathrm{V}^{\l}(x)$, 
addressing to Question \ref{quest2}. This is the content of
the following theorem.
%that follows from a result due to Hardy and Ramanujan.

\begin{thm}\label{OhVl}
For each $\l\geq 1$, $\mathrm{V}^{\l}(x)=\Oh_{\l}\left(\dfrac{x}{\log x}(\log\log x)^{\l}\right).$
\end{thm}

\begin{proof}
Since the number of prime divisors of an element in the pre-image of a totient in $\mathscr{V}^{\l}$ is at most $\l+1$, it follows that 
$$\mathrm{V}^{\l}(x)<\sum_{i=1}^{\l+1}\pi_{i}(x),$$ where
$\pi_{i}(x)=\#\{n\leq x\,;\,\omega(n)=i\}$ is the number of
elements not bigger than $x$ such that in their prime factorization appear exactly $i$ 
different prime numbers. In \cite{HR} Hardy and Ramanujan proved that 
$$\pi_{i}(x)<M\left(\dfrac{x}{\log x}\dfrac{(\log\log x+c)^{i-1}}{(i-1)!}\right).$$
From the above above two inequalities the results follows easily as follows.
\begin{eqnarray*}
\mathrm{V}^{\l}(x)=\Oh_{\l}\left(\sum_{i=1}^{\l+1}\dfrac{x}{\log x}\dfrac{(\log\log x)^{\l}}{(i-1)!}\right)=\Oh_{\l}\left(\dfrac{x}{\log x}(\log\log x)^{\l}\right).
\end{eqnarray*}
\end{proof}

We strong believe that the above upper bound for 
$\mathrm{V}^{\l}(x)$ can be improved 
using better sieve methods. Compare, for example, the above 
bound in the particular case $\l=1$ to Corollary \ref{main} of 
the previous section. On the other hand, for the purposes of 
this paper the above upper bound fits nicely. The next theorem is due to Maier and 
Pomerance and it can be found in \cite{MaPom}.

\begin{thm}[Maier--Pomerance]\label{o.2}
$$V(x) = \dfrac{x}{\log x} \exp\left((C + \oh(1))(\log \log \log x)^2\right).$$
\end{thm}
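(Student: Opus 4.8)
The plan is to convert the counting of distinct totients into a problem about their mean multiplicity. The starting observation is that summing multiplicities collapses to a preimage count,
$$\sum_{n\le x}\A(n)=\#\{m:\phi(m)\le x\},$$
and the right-hand side is $\sim D\,x$ for an explicit constant $D$ (a routine convolution and partial-summation estimate, using $\phi(m)\asymp m$). Since $\A(n)=0$ off the totients, this gives $V(x)=D\,x/\overline{\A}(x)$, where $\overline{\A}(x)$ is the average multiplicity of the totients up to $x$. Thus the theorem is equivalent to the assertion $\overline{\A}(x)=D\log x\cdot\exp\!\big(-(C+\oh(1))(\log\log\log x)^{2}\big)$, and the whole task becomes locating the average multiplicity between $\log x$ and $(\log x)^{1-\varepsilon}$.

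For the lower bound on $V(x)$ (equivalently the upper bound on $\overline{\A}$) I would exhibit many provably distinct totients by a direct construction. Take $m$ squarefree with prime factors $p_1,\dots,p_k$ chosen so that each shifted prime $p_i-1$ carries a prescribed, atypically large number of small prime factors; then $\phi(m)=\prod_i(p_i-1)$, and one counts how many genuinely different products arise. The supply of admissible primes is governed by a large-deviation (Chernoff-type) estimate for $\omega(p-1)$ over primes, and to make this unconditional one feeds the Bombieri--Vinogradov theorem to control shifted primes in arithmetic progressions on average. Optimising the prescribed prime-factor profile produces the factor $\exp\!\big((C+\oh(1))(\log\log\log x)^{2}\big)$.

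The upper bound on $V(x)$ runs in the reverse direction and refines the Hardy--Ramanujan count already used in Theorem \ref{OhVl}. A totient $n$ and a preimage $m$ organise into a tree: the primes dividing $m$ sit at the top, each contributing its shift $p-1$, whose prime factors sit one level down and are themselves shifts, and so on until the primes become small. Bounding the number of admissible such trees of bounded depth, and weighting each by the reciprocal of its typical multiplicity, I would show that no more than $\exp\!\big((C+\oh(1))(\log\log\log x)^{2}\big)$ totients beyond the shifted-prime count can occur; the same large-deviation input, now used as an upper-tail bound on $\omega(p-1)$, supplies the matching cap.

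The main obstacle is not the combinatorial bookkeeping but making the constant $C$ in the two exponents coincide. Each bound on its own yields $\exp\!\big(\Oh((\log\log\log x)^{2})\big)$ comparatively cheaply; the square itself is the signature of the underlying variational problem, whose optimal profile for how the shifted primes distribute their prime factors across scales contributes linearly and is then aggregated over $\sim\log\log\log x$ scales. Forcing the extremal profiles of the construction and of the sieve to agree requires large-deviation asymptotics for $\omega(p-1)$ that hold uniformly enough over the relevant arithmetic progressions, and it is precisely this uniformity that demands the deep analytic input (Bombieri--Vinogradov, and in sharper treatments the dispersion method). I would therefore regard the construction and the sieve as scaffolding and concentrate the real work on the uniform large-deviation estimates that pin down $C$.
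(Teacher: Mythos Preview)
The paper does not prove this theorem at all: it is quoted verbatim as a result of Maier and Pomerance, with the sentence ``The next theorem is due to Maier and Pomerance and it can be found in \cite{MaPom}'' and no further argument. There is therefore no proof in the paper to compare your proposal against.

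As for the proposal itself, what you have written is a reasonable high-level caricature of the original Maier--Pomerance strategy (and of Ford's later refinement): the identity $\sum_{n\le x}\A(n)\sim D x$ reducing the problem to the average multiplicity, a lower bound on $V(x)$ via an explicit construction of many distinct totients built from primes $p$ with $p-1$ having a tuned number of small prime factors, an upper bound via a structured count of possible preimage shapes, and the emergence of the constant $C$ from a variational problem over the profile of prime factors across logarithmic scales. That said, the sketch is far from a proof: the ``tree'' picture for the upper bound, the uniform large-deviation estimates for $\omega(p-1)$, and especially the matching of constants are each substantial pieces of work that you have named rather than carried out. If your intent was to reproduce the paper's treatment, the correct move is simply to cite \cite{MaPom} as the paper does; if your intent was to supply an independent proof, what you have is an outline, not an argument.
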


\noindent The proof of the following useful result follows easily from Theorem \ref{OhVl} and from Theorem \ref{o.2}.

\begin{cor}\label{mc1}
For any $\ell\in \mathbb{N}$ and any real number $M>0$ we have $$\lim_{x\to \infty}\dfrac{V(x)}{V^{\ell}(M x)}=\infty.$$
\end{cor}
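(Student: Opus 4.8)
The plan is to compare the two bounds established just above and push both to infinity simultaneously. Concretely, I would estimate the ratio
$$
\frac{V(x)}{V^{\ell}(Mx)}
$$
from below by inserting the upper bound for $V^{\ell}$ from Theorem \ref{OhVl} into the denominator and the Maier--Pomerance lower bound for $V$ from Theorem \ref{o.2} into the numerator. The point is that $V(x)$ and $V^{\ell}(Mx)$ both have leading order roughly $x/\log x$, so these factors must essentially cancel; what remains is a race between the super-polynomial-in-$\log\log\log$ factor coming from Maier--Pomerance and the mere $(\log\log)^{\ell}$ factor coming from Hardy--Ramanujan. The former dominates, which is exactly what forces the limit to be $+\infty$.

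First I would write, using Theorem \ref{OhVl} with argument $Mx$,
$$
V^{\ell}(Mx)=\Oh_{\ell}\!\left(\frac{Mx}{\log(Mx)}\,(\log\log(Mx))^{\ell}\right),
$$
and absorb the constant $M$ and the harmless shift $\log(Mx)\sim\log x$, $\log\log(Mx)\sim\log\log x$ into the implied constant, so that for large $x$ there is a constant $C_{\ell,M}$ with
$$
V^{\ell}(Mx)\le C_{\ell,M}\,\frac{x}{\log x}\,(\log\log x)^{\ell}.
$$
Next I would invoke Theorem \ref{o.2} to get, for any fixed $\varepsilon>0$ and all sufficiently large $x$,
$$
V(x)\ge \frac{x}{\log x}\,\exp\!\left((C-\varepsilon)(\log\log\log x)^2\right).
$$
Dividing, the two copies of $x/\log x$ cancel and I obtain
$$
\frac{V(x)}{V^{\ell}(Mx)}\ge \frac{1}{C_{\ell,M}}\cdot
\frac{\exp\!\left((C-\varepsilon)(\log\log\log x)^2\right)}{(\log\log x)^{\ell}}.
$$

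The only thing left is to observe that the right-hand side tends to infinity. I would take logarithms: the logarithm of the numerator grows like $(C-\varepsilon)(\log\log\log x)^2$, while the logarithm of the denominator grows like $\ell\,\log\log\log x$, which is linear rather than quadratic in $\log\log\log x$. Hence the difference tends to $+\infty$, so the quotient itself diverges and the limit is $\infty$ as claimed. I do not anticipate a genuine obstacle here; the only mild care needed is the bookkeeping that lets me treat $\log(Mx)$ and $\log\log(Mx)$ as asymptotically $\log x$ and $\log\log x$ so that the constant $M$ truly disappears into the $\Oh_{\ell}$, and the verification that the Maier--Pomerance error term $(C+\oh(1))$ can be replaced by a fixed lower constant $C-\varepsilon>0$ for large $x$. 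Both are routine, and the essential content is simply that a $(\log\log\log x)^2$ inside an exponential beats any fixed power of $\log\log x$.
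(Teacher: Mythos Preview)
Your proposal is correct and is exactly the argument the paper has in mind: the paper simply states that the corollary ``follows easily from Theorem~\ref{OhVl} and from Theorem~\ref{o.2}'', and you have spelled out precisely that comparison, including the decisive observation that $\exp\bigl((C-\varepsilon)(\log\log\log x)^2\bigr)$ dominates any fixed power $(\log\log x)^{\ell}$.
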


%\noindent For each a real number $x>0$, $\mathrm{S}(x)=\sum_{k=1}^{\infty}\mathrm{S}^{k}(x)$ is exactly the number of solution of the inequality $\phi(z)\leq x$. 
%Therefore, $S(x)\ge V(x)(\log x)^{1+\oh(1)}$, c.f. \cite{F1998}. 

\noindent For each real number $x>0$, let $k_0:=k_0(x)$ be the smallest natural 
number such that $\mathrm{S}^{\l}(x)=0$ for every $\l>k_0$. 

\begin{lem}
 $k_0(x)=\left\lfloor \dfrac{\log x}{\log 2}\right \rfloor$.
 \end{lem}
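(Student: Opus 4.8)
The plan is to reinterpret $k_0(x)$ in purely multiplicative terms and then pin it down by a two-sided estimate. First I would observe that since every element of $\mathscr{V}^{\l}(x)$ is a genuine totient, its multiplicity $\A(m)$ is a positive integer; hence $\mathrm{S}^{\l}(x)=\sum_{m\in\mathscr{V}^{\l}(x)}\A(m)$ vanishes if and only if $\mathscr{V}^{\l}(x)=\emptyset$. Writing $v_2(m)$ for the $2$-adic valuation of $m$, the condition $m\in\mathscr{V}^{\l}$ means exactly $v_2(m)=\l$. Consequently $k_0(x)$, the least natural number with $\mathrm{S}^{\l}(x)=0$ for all $\l>k_0(x)$, is precisely
\[
k_0(x)=\max\{v_2(m)\,;\,m\in\mathscr{V},\ m\le x\}.
\]
This maximum exists because totients $\le x$ form a finite set. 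So the whole task reduces to locating the largest $2$-adic valuation achievable by a totient not exceeding $x$.

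For the lower bound I would exhibit an explicit economical witness. Since $\phi(2^{\l+1})=2^{\l}$ for every $\l\ge 0$, the number $2^{\l}$ is a totient with $v_2(2^{\l})=\l$, i.e. $2^{\l}\in\mathscr{V}^{\l}$. This witness satisfies $2^{\l}\le x$ exactly when $\l\le \log_2 x$, that is $\l\le\lfloor\log_2 x\rfloor$. Taking $\l=\lfloor\log_2 x\rfloor$ shows $\mathscr{V}^{\l}(x)\ne\emptyset$, hence $\mathrm{S}^{\l}(x)>0$, and therefore $k_0(x)\ge\lfloor\log_2 x\rfloor$.

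For the upper bound I would use only divisibility. If $m\le x$ is a totient with $v_2(m)=\l$, then $2^{\l}\mid m$, and since $m$ is a positive integer divisible by $2^{\l}$ we get $2^{\l}\le m\le x$, whence $\l\le\log_2 x$ and thus $\l\le\lfloor\log_2 x\rfloor$. As this holds for every totient $\le x$, we conclude $k_0(x)\le\lfloor\log_2 x\rfloor$. Combining the two bounds yields $k_0(x)=\lfloor\log_2 x\rfloor$, which on rewriting $\log_2 x=\log x/\log 2$ is the claimed identity.

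There is no substantive obstacle here: the argument is elementary once $k_0(x)$ is recognized as the maximal $2$-adic valuation among small totients, and the powers $2^{\l}=\phi(2^{\l+1})$ simultaneously realize the extremal valuation and the extremal (smallest) size. The only point deserving a line of care is the boundary behaviour—checking that the convention $0\in\N$ makes the formula correct for small $x$ (for instance $x<2$, where the only totient $\le x$ is $1$ and both sides equal $0$), and noting that the set of $\l$ with $\mathrm{S}^{\l}(x)>0$ need not be an interval, which is why the clean characterization as a maximum, rather than as a threshold, is the right formulation to invoke.
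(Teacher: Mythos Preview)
Your argument is correct and follows essentially the same route as the paper: both hinge on the observation that $2^{\l}=\phi(2^{\l+1})$ is the smallest totient in $\mathscr{V}^{\l}$, which simultaneously supplies the witness for the lower bound and the divisibility constraint for the upper bound. Your version simply unpacks this into an explicit two-sided estimate and records the reformulation $k_0(x)=\max\{v_2(m):m\in\mathscr{V},\,m\le x\}$, whereas the paper compresses the same reasoning into a single line.
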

 \begin{proof}Let $x>0$ be a real number. Since $\phi(2^{\l+1})=2^{\l}$ is the
smallest totient in $\mathscr{V}^{\l}(x)$, $\mathrm{S}^{\l}(x)\neq 0$ if and only if $x\geq 2^{\l}$.
 Hence $k_0(x)=\left\lfloor \dfrac{\log x}{\log 2}\right \rfloor$.
% If $x<2^{k-1}$ then there is no $m\equiv 2^{k-1} (\mbox{mod. }  2^k)$ in particular, $A(m)=0$ for every $m<x$, 
% that is, $k>1+\dfrac{\log x}{\log 2}$ implies $S^k(x)=0$.
% If $x\ge 2^k$ then since $\phi(2^k)=2^{k-1}$ we have that $S^k(x)\neq 0$. This implies that 
% $k_0(x)> \dfrac{\log x}{\log 2}.$ The inequalities
% 	$$ \dfrac{\log x}{\log 2}<k_0(x)\leq \dfrac{\log x}{\log 2}+1$$
% imply the result.	
\end{proof}

\begin{lem}\label{lemal0n}
For every $x>1$, $S^{\ell+n}(2^n x)\ge \dfrac{S^{\ell}(x)}{2^n}$, $\forall\,n\geq 1$.
\end{lem}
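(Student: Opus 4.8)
The plan is to translate the statement about sums of multiplicities into a counting statement about preimages, and then exhibit an explicit bounded-to-one map between the relevant preimage sets. Since $\mathrm{S}^{k}(x)=\sum_{m\in\mathscr{V}^{k}(x)}\A(m)$ and $\A(m)=|\phi^{-1}(m)|$, grouping the preimages according to the totient they hit gives, with $v_2$ denoting the $2$-adic valuation,
$$\mathrm{S}^{k}(x)=\#\{z\in\N:\ \phi(z)\le x,\ v_2(\phi(z))=k\}.$$
Writing $T_k(x)$ for this set of integers, so that $\mathrm{S}^{k}(x)=|T_k(x)|$, the desired inequality becomes $|T_{\l+n}(2^n x)|\ge 2^{-n}|T_{\l}(x)|$. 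It therefore suffices to construct a map $T_{\l}(x)\to T_{\l+n}(2^n x)$ whose fibres have bounded cardinality.

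Next I would construct the map. For $z\in T_\l(x)$ set $\Psi(z)=2^n z$ if $z$ is even and $\Psi(z)=2^{n+1}z$ if $z$ is odd. The key computation, using multiplicativity of $\phi$ and $\phi(2^a)=2^{a-1}$, is that $\phi(\Psi(z))=2^n\phi(z)$ in both cases: if $z=2^a z_0$ with $a\ge 1$ and $z_0$ odd then $\phi(2^n z)=\phi(2^{a+n})\phi(z_0)=2^n\phi(z)$, while if $z$ is odd then $\phi(2^{n+1}z)=\phi(2^{n+1})\phi(z)=2^n\phi(z)$. Hence $v_2(\phi(\Psi(z)))=v_2(\phi(z))+n=\l+n$ and $\phi(\Psi(z))=2^n\phi(z)\le 2^n x$, so $\Psi(z)\in T_{\l+n}(2^n x)$ and $\Psi$ is well defined.

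Finally I would bound the fibres. For $w$ in the image, a preimage is obtained only by dividing $w$ by a power of $2$: the even preimage, if present, must be $w/2^n$ (which forces $v_2(w)\ge n+1$), and the odd preimage, if present, must be $w/2^{n+1}$ (which forces $v_2(w)=n+1$). Each is uniquely determined by $w$, so $\Psi$ is at most $2$-to-$1$, whence $|T_\l(x)|\le 2\,|T_{\l+n}(2^n x)|$; since $\tfrac12\ge 2^{-n}$ for $n\ge 1$, this gives the claimed bound. Equivalently, one may prove only the case $n=1$, with the map $z\mapsto 2z$ for $z$ even and $z\mapsto 4z$ for $z$ odd, and iterate it $n$ times, losing a factor $\tfrac12$ at each step.

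The computations here are routine; the only genuine subtlety, which I expect to be the main point to get right, is the parity split in the definition of $\Psi$. Multiplying an odd $z$ by $2^n$ would raise $v_2(\phi)$ by only $n-1$, because $\phi(2)=1$ contributes no factor of $2$; this is precisely why odd inputs must be multiplied by $2^{n+1}$ in order to land in exactly the stratum $\mathscr{V}^{\l+n}$. Keeping this $2$-adic bookkeeping straight, and verifying that no information beyond the parity of $z$ is discarded (so that the fibres remain of size at most two), is the crux of the argument.
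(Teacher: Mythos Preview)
Your argument is correct and follows essentially the same idea as the paper's proof: both hinge on the parity split of preimages and the identity $\phi(2z)=2\phi(z)$ for even $z$ (versus $\phi(2z)=\phi(z)$ for odd $z$). The paper proves only the step $n=1$ by observing that the odd preimages inject into the even ones via $z\mapsto 2z$, and then maps the even preimages into $T_{\ell+1}(2x)$ by $z\mapsto 2z$; iterating loses a factor $2$ at each step. Your single map $\Psi$ packages both moves at once and is at most $2$-to-$1$ for every $n$, so you actually obtain the sharper inequality $S^{\ell+n}(2^n x)\ge \tfrac{1}{2}S^{\ell}(x)$ before weakening to the stated $2^{-n}$ bound; this is a modest improvement the paper's iteration does not see, but the underlying mechanism is identical.
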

\begin{proof}
Set $t=S^{\ell}(x)$ and take $n_1,n_2,...,n_t$ with $n_i\neq n_j$ such that $\phi(n_i)\in \mathcal{V}^{\ell}(x)$. 
Consider $A=\{i;\, n_i \ \mbox{is odd}\}$ and $B=\{i;\,n_i \ \mbox{is even}\}.$ Obviously $t=|A\cup B|$. 
We easily see that $|A|\le |B|$ and then  $t\le 2\cdot |B|$. Now, for each $i\in B$,
$$\phi(2n_i)=2\phi(n_i)\in \mathcal{V}^{\ell+1}(2x)$$ 
and so $\mathrm{S}^{\l+1}(2x)\geq \mathrm{S}^{\l}(x)\cdot 2^{-1}$.
\end{proof}

%For $x,y>1$ and $C>0$ we define 
%	$$L(x,y,C)=\left\{\ell\le k_0(x); S^{\ell}(y)\ge C\cdot \log 2 \cdot V(x)\cdot (\log x)^{\epsilon(x)}\right \}$$ 
%and $L(x):=L(x,x,1).$
%By the Dirichlet's Box Principle $L(x)\neq \emptyset $, that is, there exists $\ell\le k_0$ such that
%	\begin{equation}\label{eq.3.1} 
%	S^{\ell}(x)\ge \dfrac{V(x)\cdot (\log x)^{1+\epsilon(x)}}{k_0(x)-1}\geq c\cdot V(x)(\log x)^{\epsilon(x)}, \quad c=\log 2.	
%	\end{equation}	

\noindent Given a real number $x$, there is $2\leq \tilde{\l}(x)\leq k_0(x)$ such that $\mathrm{S}^{\tilde{\l}(x)}(x)>\dfrac{1}{2^{\tilde{\l}(x)}}\mathrm{V}(x)$.
Otherwise $x<\mathrm{S}(x)=\sum_{i=1}^{k_{0}(x)}\mathrm{S}^{i}(x)<\mathrm{V}(x)$, getting a contradiction.
Hence, for each positive real number $x$, set
$$\l(x):=\min\{\tilde{\l}(x)\}.$$

\begin{hypo}\label{hypo}
There is $\l_0:=\displaystyle\liminf_{x\rightarrow\infty}\l(x)<\infty$.
\end{hypo}

We tried to give a proof of the Hypothesis, but unfortunately we were not able until now. Computations
suggest that $\l_0$ does exist, see Table ~\ref{tabela2}. It is also clear that Dickson's $k$-tuples Conjecture
is stronger that our Hypothesis.
\begin{center}
	\begin{table}[H]
		\caption{Collecting $(2^{\l}\cdot\mathrm{S}^{\l}(x))/\mathrm{V}(x)$}
		\begin{tabular}{lccccc}
			 & $x=10^6$ & $x=5\cdot10^6$ & $x=10^7$ & $x=5\cdot10^7$ & $x=10^8$ \\ \hline	
		$\l=2$ & $4,6044\ldots$ & $4,4593\ldots$ & $4,4033\ldots$ & $4,2755\ldots$ & $4,2223\ldots$ \\ \hline
		$\l=3$	& $13,3598\ldots$ & $13,3172\ldots$ & $13,2956\ldots$ & $13,2309\ldots$ & $13,1984\ldots$ \\ \hline
		$\l=4$	& $29,8629\ldots$ & $30,5975\ldots$ & $30,8712\ldots$ & $31,4799\ldots$ & $31,7215\ldots$ \\ \hline
		$\l=5$	& $54,3445\ldots$ & $57,7380\ldots$ & $59,0381\ldots$ & $61,9642\ldots$ & $63,1584\ldots$ \\ \hline
		$\l=6$	& $86,5368\ldots$ & $95,3994\ldots$ & $98,8584\ldots$ & $107,0031\ldots$ & $110,4274\ldots$ \\ \hline
		$\l=7$	& $125,1485\ldots$ & $143,0769\ldots$ & $150,4968\ldots$ & $167,9442\ldots$ & $175,5206\ldots$ \\ \hline
		$\l=8$	& $169,4372\ldots$ & $200,5425\ldots$ & $213,6786\ldots$ & $245,7202\ldots$ & $260,0693\ldots$ \\ \hline
		$\l=9$	& $215,2255\ldots$ & $265,4653\ldots$ & $285,8896\ldots$ & $339,0014\ldots$ & $363,3946\ldots$\\ \hline
		$\l=10$	& $264,8755\ldots$ & $333,3547\ldots$ & $364,4399\ldots$ & $445,1494\ldots$ & $482,6826\ldots$ \\ \hline	
		$\l=11$	& $289,2330\ldots$ & $392,9655\ldots$ & $441,5500\ldots$ & $557,9906\ldots$ & $613,1423\ldots$ \\ \hline			  
	    $\l=12$	& $326,8434\ldots$ & $452,8810\ldots$ & $504,6976\ldots$ & $666,9787\ldots$ & $748,3791\ldots$ \\ \hline	
		$\l=13$	& $325,5704\ldots$ & $488,7035\ldots$ & $567,5796\ldots$ & $768,3839\ldots$ & $869,4729\ldots$\\ \hline	
		$\l=14$	& $290,2446\ldots$ & $497,8492\ldots$ & $598,7011\ldots$ & $855,6433\ldots$ & $984,6031\ldots$ \\ \hline	
		$\l=15$	& $299,1556\ldots$ & $492,8571\ldots$ & $598,0896\ldots$ & $908,2700\ldots$ & $1076,7877\ldots$\\ 	
		\end{tabular}
		\label{tabela2}
	\end{table}
\end{center}

\begin{thm}\label{main2}
Assuming the Hypothesis, $$\limsup_{m\in\mathscr{V}^{\l_0+n}}\mathrm{A}(m)=\infty, \ \forall n\geq 0.$$
\end{thm}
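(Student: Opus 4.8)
The plan is to deduce the statement directly from Theorem~\ref{t.2}, applied to the stratum $\mathscr{V}^{\l_0+n}$, with the required unbounded factor supplied by Corollary~\ref{mc1} and the required lower bound for $\mathrm{S}^{\l_0+n}$ manufactured from the Hypothesis through Lemma~\ref{lemal0n}. Fix $n\geq 0$. The first step is to extract a witnessing sequence. Since $\l(x)$ is integer-valued and bounded below by $2$, the equality $\l_0=\liminf_{x\to\infty}\l(x)<\infty$ forces an increasing sequence $(x_i)_{i\in\N}$ with $x_i\to\infty$ and $\l(x_i)=\l_0$ for every $i$. By the very definition $\l(x)=\min\{\tilde{\l}(x)\}$, the value $\l_0$ is then an admissible $\tilde{\l}$ at each $x_i$, so that
\begin{equation*}
\mathrm{S}^{\l_0}(x_i)>\frac{1}{2^{\l_0}}\,\mathrm{V}(x_i)\qquad\text{for all }i.
\end{equation*}

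Next I would push this bound up to level $\l_0+n$. Put $y_i:=2^{n}x_i$, which is again increasing and tends to infinity. Applying Lemma~\ref{lemal0n} (trivially for $n=0$, and in its stated form for $n\geq 1$) gives
\begin{equation*}
\mathrm{S}^{\l_0+n}(y_i)=\mathrm{S}^{\l_0+n}(2^{n}x_i)\geq \frac{1}{2^{n}}\,\mathrm{S}^{\l_0}(x_i)>\frac{1}{2^{\l_0+n}}\,\mathrm{V}(x_i).
\end{equation*}
It remains to read the right-hand side as $\mathrm{V}^{\l_0+n}(y_i)$ times a diverging quantity. I would set
\begin{equation*}
f(y):=\frac{1}{2^{\l_0+n}}\cdot\frac{\mathrm{V}(y/2^{n})}{\mathrm{V}^{\l_0+n}(y)}.
\end{equation*}
Corollary~\ref{mc1}, with scaling parameter $M=2^{n}$ and level $\ell=\l_0+n$, yields $\lim_{y\to\infty}f(y)=\infty$, hence $\liminf_{y\to\infty}f(y)=\infty$, which is condition~(i) of Theorem~\ref{t.2}. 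Since $\mathrm{V}(x_i)=\mathrm{V}(y_i/2^{n})$, the displayed inequality rewrites as $\mathrm{S}^{\l_0+n}(y_i)>\mathrm{V}^{\l_0+n}(y_i)\,f(y_i)$, which is condition~(ii) along $(y_i)$. Theorem~\ref{t.2} then delivers $\limsup_{m\in\mathscr{V}^{\l_0+n}}\A(m)=\infty$, as desired. Alternatively, one may prove only the case $n=0$ by this argument (taking $y_i=x_i$ and $f(y)=2^{-\l_0}\mathrm{V}(y)/\mathrm{V}^{\l_0}(y)$) and then invoke Proposition~\ref{cond1} to cover every $n\geq 0$ at once.

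The deduction is essentially an assembly of the preceding lemmas, so I do not anticipate a genuine obstacle inside this proof: the real difficulty has been pushed entirely into the Hypothesis, whose validity (the existence and finiteness of $\l_0$) is assumed. Within the assembly the two steps demanding care are, first, justifying that $\l(x_i)=\l_0$ really delivers $\mathrm{S}^{\l_0}(x_i)>2^{-\l_0}\mathrm{V}(x_i)$ --- this is immediate once one unwinds the definitions of $\tilde{\l}(x)$ and $\l(x)$, but it is the unique place where the Hypothesis is consumed --- and, second, packaging the level shift $x\mapsto 2^{n}x$ produced by Lemma~\ref{lemal0n} into the scaling parameter $M=2^{n}$ of Corollary~\ref{mc1}, so that $f$ is a bona fide function of one real variable with $\liminf f=\infty$. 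Everything else is bookkeeping.
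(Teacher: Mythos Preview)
Your proof is correct and follows essentially the same route as the paper: extract a sequence $(x_i)$ witnessing $\mathrm{S}^{\l_0}(x_i)>2^{-\l_0}\mathrm{V}(x_i)$ from the Hypothesis, lift to level $\l_0+n$ via Lemma~\ref{lemal0n}, and feed the resulting inequality into Theorem~\ref{t.2} using Corollary~\ref{mc1} to make the comparison factor diverge. If anything you are more careful than the paper --- you write down the function $f(y)=2^{-(\l_0+n)}\mathrm{V}(y/2^n)/\mathrm{V}^{\l_0+n}(y)$ explicitly and check that the scaling $M=2^n$ in Corollary~\ref{mc1} matches the shift from Lemma~\ref{lemal0n}, whereas the paper leaves this implicit; your observation that Proposition~\ref{cond1} would also finish the $n\ge1$ case after treating $n=0$ is a valid shortcut the paper does not take.
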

\begin{proof}We first prove in the case that $n=0$. 
Take $\l_0$ as in Hypothesis. Thus there is a sequence $(x_i)$ with $x_i\rightarrow\infty$ such that
\begin{equation*}
\mathrm{S}^{\l_0}(x_i)>\dfrac{1}{2^{\l_{0}}}\mathrm{V}(x_i)=\dfrac{1}{2^{\l_{0}}}\dfrac{\mathrm{V}(x_i)}{\mathrm{V}^{\l_0}(x_i)}\mathrm{V}^{\l_0}(x_i).
\end{equation*}
From Corollary \ref{mc1}, we know that $2^{-\l_0}\frac{\mathrm{V}(x_i)}{\mathrm{V}^{\l_0}(x_i)}$ 
goes to infinity when $i\rightarrow\infty$. Hence the Theorem \ref{t.2} assures that $\limsup_{m\in\mathcal{V}^{\l_0}}\,\mathrm{A}(m)=\infty$.

Now take any integer $n\geq 1$. From Lemma \ref{lemal0n} and the Hypothesis there is a 
sequence $(x_i)$ that goes to infinity such that
\begin{equation*}
\mathrm{S}^{\l_0+n}(2^nx_i)\geq \dfrac{1}{2^n}\mathrm{S}^{\l_0}(x_i)>\dfrac{1}{2^{\l_0+n}}\dfrac{\mathrm{V}(x_i)}{\mathrm{V}^{\l_0+m}(2^{n}x_i)}\mathrm{V}^{\l_0+m}(2^{n}x_i)
\end{equation*} and again by the Corollary \ref{mc1} and Theorem \ref{t.2} 
we are done.
\end{proof}

\end{document}